\theoremstyle{plain}
\newtheorem{theorem}{Theorem}[section]
\newtheorem{proposition}[theorem]{Proposition}
\newtheorem{lemma}[theorem]{Lemma}
\newtheorem{hypothesis}[theorem]{Hypothesis}
\newtheorem{problem}[theorem]{Problem}
\theoremstyle{remark}
\newcommand{\s}{\mathcal{S}}
\newcommand{\p}{\mathcal{P}}
\renewcommand{\le}{\leqslant}
\renewcommand{\ge}{\geqslant}
\renewcommand{\leq}{\leqslant}
\renewcommand{\geq}{\geqslant}
\newcommand{\Aut}{\textup{Aut}}
\newcommand{\GL}{\mathrm{GL}}
\newcommand{\PSL}{\mathrm{PSL}}
\newcommand{\PSU}{\mathrm{PSU}}
\newcommand{\PSP}{\mathrm{PSp}}
\newcommand{\Sp}{\mathrm{Sp}}
\newcommand{\SO}{\mathrm{SO}}
\newcommand{\SU}{\mathrm{SU}}
\newcommand{\SL}{\mathrm{SL}}
\newcommand{\soc}{\mathrm{soc}}
\newcommand{\GamL}{\Gamma\mathrm{L}}
\newcommand{\PGamL}{\mathrm{P}\Gamma\mathrm{L}}
\newcommand{\lhdeq}{\trianglelefteqslant}    
\title[Hexagons, octagons and projective linear groups]{Point-primitive generalised hexagons and octagons\\
and projective linear groups}
\author{S. P. Glasby, E. Pierro, Cheryl E. Praeger}
\date{\currenttime\ \today}       
\address{
\phantom{|}\kern-1cm
Centre for the Mathematics of Symmetry and Computation\\
Department of Mathematics and Statistics\\
The University of Western Australia\\
35 Stirling Highway, Perth, WA 6009, Australia.\newline
{{\sc Email addresses:} \tt \{stephen.glasby,emilio.pierro,cheryl.praeger\}@uwa.edu.au}
}
\subjclass[2010]{primary 51E12; secondary 20B15, 05B25}
\keywords{generalised hexagon, generalised octagon, generalised polygon, primitive permutation group}
\begin{document}

\begin{abstract}
We discuss recent progress on the problem of classifying point-primitive generalised polygons. In the case of generalised hexagons and generalised octagons, this has reduced the problem to primitive actions of 
almost simple groups of Lie type.
To illustrate how the natural geometry of these groups may be used in this study, we show that if $\s$ is a finite thick generalised hexagon or octagon with $G \le\Aut(\s)$ acting point-primitively and the socle of $G$ isomorphic to $\PSL_n(q)$ where $n \ge 2$, then the stabiliser of a point acts irreducibly on the natural module.
We describe a strategy to prove that such a generalised hexagon or octagon $\s$ does not exist.
\end{abstract}

\maketitle

\section{Introduction}
We show in this paper that the Aschbacher--Dynkin \cite{asch} classification of maximal subgroups is a potentially useful tool to investigate whether or not a finite thick generalised hexagon or octagon admits a large rank classical group with a point-primitive action.

The notion of a generalised polygon arose from the investigations of Tits \cite{tits59} and is connected with the groups of Lie type having twisted Lie rank $2$.
They belong to a wider class of geometric objects known as buildings, which were also introduced by Tits, whose motivation was to find natural geometric objects on which the finite groups of Lie type act, in order to work towards a proof of the classification of finite simple groups.
Indeed, all families of simple groups of Lie type having twisted Lie rank $2$ arise as automorphism groups of generalised polygons.
The groups $A_2(q) \cong \PSL_3(q)$ are admitted by generalised triangles (projective planes); the groups $^2A_3(q) \cong \PSU_4(q)$, $^2A_4(q) \cong \PSU_5(q)$, $B_2(q) \cong O_5(q)$, $C_2(q) \cong \PSP_4(q)$ and $^2D_3(q) \cong O_6^-(q)$ are admitted by generalised quadrangles; the groups $G_2(q)'$ and $^3D_4(q)'$ are admitted by generalised hexagons; and the groups $^2F_4(2^{2m+1})'$ are admitted by generalised octagons.
Isomorphisms between these groups, namely $B_2(q) \cong C_2(q)$ and $^2A_3(q) \cong {}^2D_3(q)$, lead to  these particular groups acting on more than one geometry.

We refer the reader to Van Maldeghem's book~\cite{van} both for further details about these classical generalised polygons, and for a full introduction to the theory of generalised polygons.
An \emph{incidence geometry $\s = (\p,\mathcal{L},\mathcal{I})$ of rank 2} consists of a point set $\p$, a line set $\mathcal{L}$ and an incidence relation $\mathcal{I} \subseteq \p \times \mathcal{L}$ such that $\p$ and $\mathcal{L}$ are disjoint non-empty sets.
We say that $\s$ is finite if $|\p \cup \mathcal{L}|$ is finite.
The \emph{dual} of $\s$ is $\s^D = (\mathcal{L},\p,\mathcal{I}^D)$, where $(p,\ell) \in \mathcal{I}$ if and only if $(\ell,p) \in \mathcal{I}^D$.
We say that $\s$ is \emph{thick} if each point is incident with at least three lines and each line is incident with at least three points.
A \emph{flag} of $\s$ is a set $\{p,\ell\}$ with $p \in \p$, $\ell \in \mathcal{L}$ and $(p,\ell) \in \mathcal{I}$.
The \emph{incidence graph} of $\s$ is the bipartite graph whose vertices are $\p \cup \mathcal{L}$ and whose edges are the flags of $\s$.
A \emph{generalised $n$-gon} is, then, a thick incidence geometry of rank 2 whose incidence graph is connected of diameter $n$ and girth $2n$ such that each vertex lies on at least three edges \cite[Lemma 1.3.6]{van}.
It is not immediate, but if $\s$ is a generalised $n$-gon, then there are constants $s, t \ge 2$ such that each point is incident with $t+1$ lines and each line is incident with $s+1$ points \cite[Corollary 1.5.3]{van}.
We then say that the \emph{order} of $\s$ is $(s,t)$.
A \emph{collineation} of $\s$ is a pair $(\alpha,\beta) \in\textup{Sym}(\p)\times\textup{Sym}(\mathcal{L})$ that preserves the subset $\mathcal{I} \subseteq \p \times \mathcal{L}$.
The subset of all collineations of $\textup{Sym}(\p)\times\textup{Sym}(\mathcal{L})$ is a subgroup denoted $\Aut(\s)$.
A celebrated result of Feit and Higman \cite{feithigman} states that if $\mathcal{S}$ is a finite thick generalised $n$-gon, then $n \in \{2,3,4,6,8\}$, and the square-free part of $st$ is $1$ if $n\in\{3,6\}$ and $2$ if $n=8$.

The classification of (not necessarily thick) generalised polygons admitting an automorphism group which acts distance-transitively on the \emph{point graph} of $\s$ is due to Buekenhout and Van Maldeghem \cite{bvm}. This is the graph with points as vertices and with two points adjacent if they are collinear. 
The assumption of distance-transitivity for this graph is strong.
If $\s$ is also thick, then Buekenhout and Van Maldeghem show that the socle of $G$ is one of the aforementioned Lie type groups and $\s$ is classical.
In addition, they show that distance-transitivity implies that $G$ acts primitively on $\p$.
In recent years there has been work by a number of authors to show that the assumption of distance-transitivity can be relaxed.

In this paper we focus on the cases $n=6$ or $8$, that is, the generalised hexagons and octagons, respectively, but first we
 mention briefly what is known about $\Aut(\s)$ for the other generalised polygons.
In the case $n=2$, the incidence graph of $\mathcal{S}$ is a complete bipartite graph and so $\Aut(\s) \cong\textup{Sym}(\p) \times\textup{Sym}(\mathcal{L})$.
In the case $n=3$, $\mathcal{S}$ is a projective plane and the recent characterisation due to Gill \cite{gill07} is currently the best result.
It is shown by Gill that if $G$ acts transitively on the points of a generalised $3$-gon $\s$, then either $\s$ is Desarguesian or each minimal normal subgroup of $G$ is elementary abelian.
In the case $n=4$, Bamberg, Giudici, Morris, Royle and Spiga \cite{Bamberg2012gq} prove that if $\mathcal{S}$ is finite and thick, and $G$ acts primitively on both $\p$ and $\mathcal{L}$, then $G$ is almost simple.
If in addition $G$ acts flag-transitively, then $G$ is almost simple of Lie type.
Weakening the hypothesis significantly, by removing flag-transitivity and by relaxing the assumption of line-primitivity to line-transitivity, Bamberg, Popiel and the third author were able to show in \cite{bpp17} that such a group $G$ has a unique minimal normal subgroup.
They obtain many further restrictions in \cites{JCD16,Nag19} and conjecture that a point-primitive group acting on a finite thick generalised quadrangle must be of affine or almost simple type \cite[Conjecture 1.7]{Nag19}.


Henceforth, $\s$ will denote a finite thick generalised hexagon or octagon.
At present the only known examples of thick generalised hexagons and octagons are the split Cayley hexagon $H(q)$, the twisted triality hexagon $T(q,q^3)$, the Ree--Tits octagon $O(2^{2m+1})$ and the duals of each of them.
These correspond to the groups $G_2(q)$, $^3D_4(q)$ and $^2F_4(2^{2m+1})$ and complete descriptions of these can be found in \cite{van}.
Schneider and Van Maldeghem \cite[Theorem 2.1]{schneivanmal} showed that if $G \le$ $\Aut(\s)$ acts flag-transitively, point-primitively and line-primitively, then $G$ is an almost simple group of Lie type.
The following theorem, which significantly strengthened this result, provided motivation for the present paper.

\begin{theorem}[\cite{bgpps}*{Theorem 1.2}] \label{motiv}
Let $\mathcal{S}$ be a finite thick generalised hexagon or octagon.
If a subgroup $G$ of $\Aut(\mathcal{S})$ acts point-primitively, then $G$ is an almost simple group of Lie type. 
\end{theorem}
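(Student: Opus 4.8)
The plan is to apply the O'Nan--Scott theorem to the point-primitive permutation group $G\le\Aut(\s)$ on $\p$ and to eliminate every type except the almost simple ones with socle of Lie type. The argument rests on two facts. First, with $v=|\p|$ and $(s,t)$ the order of $\s$ ($s,t\ge 2$), one has $v=(1+s)(1+st+s^2t^2)$ for a hexagon and $v=(1+s)(1+st)(1+s^2t^2)$ for an octagon, while Feit--Higman forces $st$ to be a perfect square (hexagon) or $2st$ a perfect square (octagon). Hence, putting $u=\sqrt{st}\ge 2$, in the hexagon case $1+st+s^2t^2$ is odd and equals the product of the coprime integers $u^2+u+1\ge 7$ and $u^2-u+1\ge 3$, and in the octagon case $1+st$ and $1+s^2t^2$ are coprime and each exceeds $1$; so $v$ always has at least two distinct odd prime divisors and is never a prime power. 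Second, the collinearity graph $\Gamma$ of $\s$ is connected and distance-regular of diameter $d=3$ (hexagon) or $d=4$ (octagon); because the incidence graph has girth $12$ or $16$, $\Gamma$ has intersection numbers $c_i=1$ for all $i<d$, and for every $g\in G$ the numbers of points moved distance $i$ by $g$ obey a Benson-type congruence modulo a divisor extracted from the eigenvalues of $\Gamma$.

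The affine (HA) type is ruled out immediately, since there $v$ would be a prime power. For the types with non-simple socle $T^k$, $T$ non-abelian simple and $k\ge 2$ --- that is, HS, HC, SD, CD, PA and TW --- I proceed as follows. In the product-action and compound-diagonal types $v=|\Delta|^k$ with $k\ge 2$ is a proper perfect power and $G$ preserves a Cartesian decomposition of $\p$; either feature, combined with the factorisation of $v$ above or with the incompatibility of a Hamming-type structure with the intersection numbers $c_i=1$, yields a contradiction. In the diagonal types a point stabiliser meets $\soc(G)$ in a full diagonal subgroup $\cong T$, so $v=|T|^{k-1}$, and likewise $v=|T|^k$ in the twisted-wreath type where the socle is regular; when the exponent is at least $2$, $v$ is a proper perfect power, again contradicting the factorisation of $v$, while the remaining case ($k=2$ diagonal, $v=|T|$) has $\Gamma$ a Cayley graph of $T$ whose connection set is a union of conjugacy classes, and distance-regularity with $c_2=1$ then forces $|T|$ small enough for a finite inspection of the simple groups to conclude.

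It remains, in the almost simple case, to prove that $T=\soc(G)$ is of Lie type, i.e.\ to exclude $T$ sporadic and $T=A_n$ with $n$ large. For $T$ sporadic this is a finite computation: over the $26$ groups and their known maximal subgroups $M$, keep only the pairs with $|G:M|$ of the form $(1+s)(1+st+s^2t^2)$ or $(1+s)(1+st)(1+s^2t^2)$ for admissible $(s,t)$, and kill the survivors with the Benson-type congruences for elements of small prime order. For $T=A_n$, the distance partition of $\Gamma$ from a point displays $\p$ as a union of at least $d+1\ge 4$ orbits of a point stabiliser, so $(G,\p)$ has permutation rank at least $4$ (hexagon) or $5$ (octagon); the primitive actions of $A_n$ and $S_n$ of such small rank form a short list of families --- essentially the actions on $k$-subsets for $k\le 4$ together with finitely many small-degree exceptions --- and in each family the demand that $\Gamma$ be one of the distance-regular graphs permitted by Feit--Higman, with its rigid array $\{s(t+1),st,st;1,1,t+1\}$ or the octagon analogue, becomes a Diophantine condition admitting no solution.

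The step I expect to be the main obstacle is the last one. Matching the prescribed intersection arrays against the parameters of the Johnson-type and other low-rank schemes of $A_n$ and $S_n$ requires either an a priori upper bound on $s$ and $t$ in terms of $v$, reducing the matter to a finite search, or a careful coprimality analysis of the factors of $v$ that forces each to align with a factor of $|A_n:M|$ and so over-constrains $(s,t)$; pushing this through uniformly over all the families, along with the parallel bookkeeping for the sporadic list, is where the real work lies. By contrast the affine and product-action/diagonal cases, disposed of via the factorisation of $v$ and the $c_i=1$ obstruction, are comparatively painless.
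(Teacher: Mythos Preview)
The paper does not contain a proof of this theorem. Theorem~\ref{motiv} is quoted verbatim from \cite{bgpps} and is used solely as motivation; the only comment the present paper makes about its proof is the one-line remark that it ``relies on the classification of finite simple groups''. There is therefore nothing here against which to compare your attempt.

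That said, since you have written a proof sketch, let me flag one genuine gap in it. Your elimination of the PA, CD, SD (with $k\geq 3$) and TW types rests on the assertion that the factorisation of $v$ you derived rules out $v$ being a proper perfect power. It does not: you showed only that $v$ has at least two distinct odd prime divisors, which is perfectly compatible with $v$ being, say, a perfect square (e.g.\ $21^2=9\cdot49$). To exclude perfect powers one needs to argue further --- for instance, that the pairwise coprime factors $u^2-u+1$, $u^2+u+1$ (and, in the octagon case, $1+st$, $1+s^2t^2$) would each individually have to be $k$th powers, and then derive a contradiction from the resulting Diophantine constraints --- and this is not automatic. The actual proof in \cite{bgpps} handles the non-affine, non-almost-simple O'Nan--Scott types by rather different and more delicate arguments (centralisers of normal subgroups, fixed-point counts, and specific structural lemmas about generalised hexagons and octagons), not by a blanket ``$v$ is not a perfect power'' claim. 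Your treatment of the almost simple alternating and sporadic cases is likewise only a plan, and the rank bound you invoke for $A_n$ does not by itself reduce to the short list you describe; the low-rank primitive actions of $A_n$ are more numerous than the $k$-subset actions for $k\le 4$.
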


The proof of Theorem \ref{motiv} relies on the classification of finite simple groups.
In order to rule out certain possibilities for $\soc(G)$, it is sufficient to consider the primitive actions of the almost simple groups of Lie type, or equivalently, their maximal subgroups.
For an exceptional Lie type group that has a faithful projective representation in defining characteristic of degree at most 12,  a 
complete classification of its maximal subgroups is  summarised in~\cite[Chapter 7]{bhrd}.
Using this classification it was proved by Morgan and Popiel in \cite{mopo} that under the hypothesis of the above theorem, if in addition it is assumed that the socle of $G$ is isomorphic to one of the Suzuki--Ree groups, $^2B_2(2^{2m+1})'$, $^2G_2(3^{2m+1})'$ or $^2F_4(2^{2m+1})'$, where $m \ge 0$, then up to point-line duality, $\mathcal{S}$ is the Ree--Tits octagon $O(2^{2m+1})$.
For a general classical group $G$, however, we appeal to the Aschbacher--Dynkin classification \cite{asch} of its maximal subgroups.
The maximal subgroups of $G$ fall into eight families of ``geometric'' subgroups, those which preserve a natural geometric structure, and a ninth class of exceptions.
These classes are denoted $\mathscr{C}_i$ for $1 \le i \le 9$ and the class $\mathscr{C}_1$ consists of stabilisers of subspaces and includes the maximal parabolic subgroups of~$G$.
Our aim in this paper is to expose the utility of the Aschbacher--Dynkin classification as a tool for characterising generalised polygons whose automorphism groups have classical type socles.
Our main result is as follows.

\begin{theorem} \label{main}
Let $\mathcal{S}$ be a finite thick generalised hexagon or octagon.
If $G \le\Aut(\mathcal{S})$ acts point-primitively on $\mathcal{S}$ and the socle of $G$ is isomorphic to $\PSL_n(q)$ where $n \ge 2$, then the stabiliser of a point of $\s$ is not the stabiliser in $G$ of a subspace of the natural module $V = (\mathbb{F}_q)^n$.
\end{theorem}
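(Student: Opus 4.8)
The plan is to argue by contradiction. Suppose $G_p=G_W$, the stabiliser in $G$ of a subspace $W\le V=(\mathbb{F}_q)^n$ with $\dim W=k$ and $1\le k\le n-1$. By Theorem~\ref{motiv}, $G$ is almost simple with socle $\PSL_n(q)$, and since no element of $G$ inducing a graph automorphism of $\PSL_n(q)$ can fix a subspace of $V$, we have $G_W\le\PGamL_n(q)$; as $\soc(G)$ is transitive on $k$-subspaces of $V$ this yields $v:=|\p|=|G:G_p|=\binom{n}{k}_q$ (a Gaussian binomial coefficient). Throughout I will use the standard facts that, writing $(s,t)$ for the order of $\s$, one has $s,t\ge2$, and: for a hexagon $v=(1+s)(1+st+s^2t^2)$ with $st$ a perfect square (Feit--Higman) and $s\le t^3$, $t\le s^3$; for an octagon $v=(1+s)(1+st)(1+s^2t^2)$ with $2st$ a perfect square and $s\le t^2$, $t\le s^2$.

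The first ingredient is a lower bound on the permutation rank. The collinearity graph $\Gamma$ of $\s$ is distance-regular of diameter $d$, where $d=3$ for a hexagon and $d=4$ for an octagon, and every valency $k_i$ is positive; since $G\le\Aut(\Gamma)$ acts with point-stabiliser $G_p$, the $d+1$ distance sets $\Gamma_0(p),\dots,\Gamma_d(p)$ are distinct nonempty $G_p$-invariant subsets of $\p$, so the rank of $G$ on $\p$ is at least $d+1$. On the other hand, in the action on $k$-subspaces the orbits of $G_W$ are exactly the sets $\{W':\dim(W\cap W')=j\}$, of size $N_j=\binom{k}{j}_q\,q^{(k-j)^2}\binom{n-k}{k-j}_q$ (this uses that $\PSL_n(q)$ is transitive on pairs of $k$-subspaces with a prescribed intersection dimension), so the rank equals $\min(k,n-k)+1$. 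Hence $\min(k,n-k)\ge d$; in particular $n\ge6$ for a hexagon and $n\ge8$ for an octagon, and the $2$-transitive cases $k\in\{1,n-1\}$, where $\Gamma$ would be a complete graph, are excluded at once.

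The main step analyses the subdegrees $N_j$ against the distance distribution of $\Gamma$. Assume $k\le n/2$, the case $k>n/2$ being symmetric. Each $\Gamma_i(p)$ with $1\le i\le d$ is a $G_p$-invariant set of size $k_i$, hence a union of orbits of sizes among the $N_j$, and the sequence $k_1<k_2<\dots<k_d$ is strictly increasing because the successive ratios are $st\ge4$ and $st/(t+1)>1$; a short estimate shows the smallest nontrivial orbit has size $N_{k-1}=q(q^k-1)(q^{n-k}-1)/(q-1)^2$ and the largest exceeds $q^{k(n-k)}$, so in particular $s(t+1)=k_1\ge N_{k-1}$. When $\min(k,n-k)=d$ the rank is exactly $d+1$ and each $\Gamma_i(p)$ is a single orbit, so $k_1<\dots<k_d$ coincides with the sorted list of the $N_j$ with $j\ne k$; the relations this forces --- for a hexagon $st=k_2/k_1$ and $st/(t+1)=k_3/k_2$, whence $t+1=k_2^2/(k_1k_3)$ --- together with the Feit--Higman square condition leave no solution with $s,t\ge2$ (typically $t+1$ fails to be an integer). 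For $\min(k,n-k)>d$ one combines the bound $s(t+1)\ge N_{k-1}$ with the identity $v=\binom{n}{k}_q=(1+s)(1+st+s^2t^2)$ (or its octagon analogue), the square condition, and the inequalities $s\le t^3$, $t\le s^3$ (resp.\ $s\le t^2$, $t\le s^2$), to confine $(n,k,q,s,t)$ to a finite explicit list, which is then eliminated case by case.

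The main obstacle is this last step. It cannot be carried out with size estimates alone, because for sporadic parameters $\binom{n}{k}_q$ genuinely equals an a priori admissible value of $(1+s)(1+st+s^2t^2)$ --- for instance $\binom{6}{1}_q=(1+q)(1+q^2+q^4)$, realised by a putative order $(q,q)$ --- so one must retain the finer information that the distance sets are unions of the specific orbits of sizes $N_j$, alongside the arithmetic restrictions on $(s,t)$. The delicate part is keeping the case analysis finite once $k$ is close to $n/2$ and the rank $\min(k,n-k)+1$ is large: there several orbits may merge into a single $\Gamma_i(p)$, and one must balance the lower bound $s(t+1)\ge N_{k-1}$, the upper bounds on $s$ and $t$ coming from $v\ge\max(s^3t^2,s^2t^3)$ (hexagon), and the condition that $st$ be a square, to show that the collinearity valency and the largest subdegree cannot both be reconciled with $v$; a short machine verification over the remaining finite list then completes the proof. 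Finally, if $G$ induces a graph automorphism of $\PSL_n(q)$, then point-primitivity forces $k=n/2$ (otherwise the set of $k$-subspaces would be a nontrivial block), and this situation is handled by the same analysis.
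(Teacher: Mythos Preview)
Your approach via subdegree arithmetic is fundamentally different from the paper's, and it has a genuine gap at precisely the point you flag as the ``main obstacle''. You assert that combining $s(t+1)\ge N_{k-1}$, the equation $v=\binom{n}{k}_q$, the Feit--Higman square condition, and the Higman inequalities confines $(n,k,q,s,t)$ to a finite explicit list. But nothing in your sketch produces an upper bound on $n$, $k$, or $q$. For instance, when $n=2k$ one has $v\approx q^{k^2}$ and $N_{k-1}\approx q^{2k-1}$; the constraints $v\approx s^3t^2$ (hexagon) and $st\gtrsim q^{2k-1}$ are perfectly compatible with $k,q\to\infty$ (e.g.\ take $s=t\approx q^{k^2/5}$, which satisfies $s^2\gtrsim q^{2k-1}$ for all $k\ge5$). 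The ``short machine verification over the remaining finite list'' therefore cannot be carried out, and your own aside about $\binom{6}{1}_q=(1+q)(1+q^2+q^4)$ concerns $k=1$, which your rank argument has already excluded, so it does not illustrate the actual difficulty.

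The paper avoids all arithmetic case analysis by a uniform geometric argument. After the same rank bound (giving $k\ge3$, with $k=3$ eliminated via Buekenhout--Van Maldeghem since that case is distance-transitive), the key tool is Lemma~\ref{diamondOriginal}: if $g$ fixes two neighbours $y_1,y_2$ of $x$ but moves $x$, then $x,y_1,y_2,xg$ lie on a common line. By exhibiting explicit permutation matrices fixing two carefully chosen $k$-subspaces while moving a third, the paper shows in turn that the maximal intersection dimension $k_1$ among collinear pairs satisfies $k_1<k-1$, then $k_1=0$, then $n\in\{2k,2k+1\}$, then $n=2k$. At that point three pairwise complementary $k$-spaces are mutually collinear and hence lie on a line $\ell$; a direct matrix computation shows that the pairwise stabilisers of these three points generate $\SL_{2k}(q)$, so $G_\ell$ is transitive on all points, which is absurd. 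This argument is entirely uniform in $(n,k,q)$ and requires no computer search.
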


The subspace stabilisers considered in Theorem~\ref{main} are all maximal parabolic subgroups. Given this result, and in the light of the result of Morgan and Popiel \cite{mopo} mentioned above, it would in the first instance be good to handle all primitive coset actions of Lie type groups on maximal parabolic subgroups. 

\begin{problem}\label{prob1} 
Extend Theorem \ref{main} to show that, if $\mathcal{S}$ is a finite thick generalised hexagon or octagon and $G \le\Aut(\mathcal{S})$ is an almost simple group of Lie type such that the stabiliser $G_x$ of a point $x$ is a maximal parabolic subgroup, then $(\s, G)$ is one of the known classical examples.
\end{problem}
Problem~\ref{prob1} has been solved for the Suzuki--Ree groups in~\cite{mopo}, and it has also been solved by Popiel and the second author~\cite{pp} for the groups $G_2(q)'$. It would be especially interesting to have a solution to Problem~\ref{prob1} for the groups of (twisted or untwisted) Lie rank 2, and in particular  for the family $^3D_4(q)'$ which is the only untreated case where the groups are known to act on a generalised hexagon or octagon.  Moreover, it would be even more interesting to have a characterisation of all point-primitive actions of groups with socle $G_2(q)'$ or $^3D_4(q)'$ on a thick generalised hexagon or octagon (not just the coset actions on maximal parabolic subgroups). This, however, seems to be a substantially harder problem.


\smallskip

Maximal parabolic subgroups mentioned in Problem~\ref{prob1} are examples of large subgroups, in the sense that  a subgroup $H$ of a finite group $G$ is \emph{large} if $|H|^3 > |G|$.
Alavi and Burness \cite{large} have determined all large subgroups of all finite simple groups.
In our view the next level of attack on the general classification problem would be to handle actions on cosets of large subgroups.

\begin{problem}\label{prob2}
Extend Theorem \ref{main} to show that, if $\mathcal{S}$ is a finite thick generalised hexagon or octagon and $G \le\Aut(\mathcal{S})$ is an almost simple group of Lie type such that the stabiliser $G_x$ of a point is a large maximal subgroup, then $(\s, G)$ is one of the known classical examples.
\end{problem}

Popiel and the second author~\cite{pp} have almost solved Problem~\ref{prob2} for  groups $G$ with socle $G_2(q)'$. The only unresolved point-primitive action is on a generalised hexagon with stabiliser satisfying $G_x \cap\soc(G) \cong G_2(q^{1/2})$. 

Building on our result Theorem~\ref{main} for parabolic actions, and applying the result of Alavi and Burness \cite[Theorem 4]{large},  we show that a solution to Problem~\ref{prob2} for groups $G\cong \PSL_n(q)$ is reduced to consideration of four kinds of point actions. 
We follow Alavi and Burness in using \emph{type} to denote a rough approximation of the structure of a subgroup.

\begin{proposition} \label{large}
Let $\mathcal{S}$ be a finite thick generalised hexagon or octagon of order $(s,t)$.
Suppose that $G \leq\Aut(\mathcal{S})$ with $G \cong \PSL_n(q)$, and $G$ acts point-primitively on $\mathcal{S}$ such that the stabiliser $G_x$ of a point $x$ is a large subgroup.  Then one of the following holds:
\begin{enumerate}[{\rm (a)}]
\item $G_x$ is a $\mathscr{C}_2$-subgroup of type $\GL_{n/k}(q) \wr S_k$, where (i) $k= 2$, or (ii) $k= 3$,  $q \in \{5, 8, 9\}$, and $(n, q-1) = 1$;
\item $G_x$ is a $\mathscr{C}_3$-subgroup of type $\GL_{n/k}(q^k)$, where (i) $k = 2$, or (ii) $k = 3$ and either $q \in \{2, 3\}$, or $q = 5$ and $n$ is odd;
\item $G_x$ is a $\mathscr{C}_5$-subgroup of type $\GL_n(q_0)$ with $q = q_0^k$ , and either $k = 2$ , or $k = 3$, or;
\item $G_x \in \mathscr{C}_8$ of type $\Sp_n(q)$ ($n$ even), $\SU_n(q_0)$ ($q=q_0^2)$, $\SO_n(q)$ ($nq$ odd), or $\SO_n^\epsilon(q)$ ($n$ even, $\epsilon=\pm$).
\end{enumerate}
\end{proposition}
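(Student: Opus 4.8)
The plan is to combine Theorem~\ref{main} with the classification of Alavi and Burness \cite[Theorem~4]{large} of the large maximal subgroups of the finite simple groups. Since $G$ acts point-primitively, $G_x$ is a maximal subgroup of $G\cong\PSL_n(q)$, and since $|G_x|^3>|G|$ it appears in the Alavi--Burness list. The first step is to delete the $\mathscr{C}_1$-members of that list. These are exactly the stabilisers of the subspaces of $V=(\mathbb{F}_q)^n$, namely the maximal parabolic subgroups $P_1,\dots,P_{n-1}$; a short index estimate shows that all $n-1$ of them are large, so all occur in the list, but Theorem~\ref{main} -- which applies since $\soc(G)=\PSL_n(q)$ and $G$ is point-primitive -- tells us that $G_x$ is none of them. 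Hence $G_x$ is a non-parabolic large maximal subgroup of $\PSL_n(q)$.

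The second step is to read off the remaining entries of \cite[Theorem~4]{large} and match them to (a)--(d). A routine comparison of orders shows that no $\mathscr{C}_4$-subgroup of $\PSL_n(q)$ is large, and that any large tensor-induced ($\mathscr{C}_7$) subgroup -- which could only occur for $n=4$ -- lies inside a $\mathscr{C}_8$-subgroup of type $\SO_4^+(q)$ via the exceptional isomorphism expressing $\SL_2(q)\circ\SL_2(q)$ as an orthogonal group, so it does not give a new maximal subgroup. The remaining entries split into two parts: first, precisely the four infinite families (a)--(d), the imprimitive ($\mathscr{C}_2$), field-extension ($\mathscr{C}_3$), subfield ($\mathscr{C}_5$) and classical ($\mathscr{C}_8$) subgroups, with exactly the parameter restrictions recorded in \cite{large} -- for the delicate conditions in (a)(ii), (b)(ii) and (c), where $|G_x|^3>|G|$ is close to failing and the exact bound involves $\gcd(n,q-1)$ and the factors $q^i-1$, I would simply quote from \cite{large} rather than re-derive them; and second, a finite explicit list of ``small'' large maximal subgroups, each arising only for bounded $n$ and bounded $q$, namely the $\mathscr{C}_6$-subgroups (normalisers of symplectic-type $r$-groups, $n\in\{2,3,4\}$) and the almost simple $\mathscr{S}$-subgroups, which by Liebeck's order bound occur only for bounded $n$ and which for such $n$ and bounded $q$ can be read from the tables of \cite{bhrd}; for instance $\mathrm{Alt}_5$, $\mathrm{Alt}_6$, $\mathrm{Alt}_7$, $\PSL_2(7)$, $M_{11}$, and so on.

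The third step eliminates the finite list. Here I would use the Feit--Higman restrictions \cite{feithigman} on $|\p|=[G:G_x]$: if $\s$ is a hexagon then $|\p|=(1+s)(1+st+s^2t^2)$ with $st$ a perfect square, and if $\s$ is an octagon then $|\p|=(1+s)(1+st)(1+s^2t^2)$ with $st$ twice a perfect square, and in both cases $s,t\ge2$, so that $|\p|\ge63$ for a hexagon and $|\p|\ge1755$ for an octagon, and $|\p|$ admits only a highly restricted factorisation. For each of the finitely many exceptional pairs $(n,q)$ and each exceptional candidate for $G_x$, the index $[G:G_x]$ is a specific integer, and a direct and easily automated check confirms that it is of neither form. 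The conceptual content of the argument is thus the short combination of Theorem~\ref{main} with \cite[Theorem~4]{large}; I expect the main obstacle to be the bookkeeping in this third step -- making sure the list of exceptions is genuinely exhaustive, and then carrying out the numerical exclusions.
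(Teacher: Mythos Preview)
Your proposal is correct and follows essentially the same route as the paper: invoke Theorem~\ref{main} to discard the $\mathscr{C}_1$ cases from the Alavi--Burness list \cite[Theorem~4]{large}, keep the infinite families (a)--(d), and eliminate the residual finite list of $\mathscr{C}_6$ and $\mathscr{C}_9$ exceptions by checking that the index $[G:G_x]$ is not of the form $(1+s)(1+st+s^2t^2)$ or $(1+s)(1+st)(1+s^2t^2)$ with $s,t\ge2$. One efficiency you miss: rather than working through the entire finite list numerically, the paper first disposes of all the cases where $G$ is an Atlas group in one stroke by citing \cite{bvmremarks}*{Theorems~1.1 and~1.2}, leaving only a very short residual table (for example $M_{11}<\PSL_5(3)$, $2^4.A_6<\PSL_4(5)$, $\PSU_4(2)<\PSL_4(7)$, and $A_5<\PSL_2(q)$ for $q\in\{41,49,59,61,71\}$) to check by hand. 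Your direct check would also work, but the bookkeeping you flag as the ``main obstacle'' is substantially reduced this way.
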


\begin{proof}[Proof of Proposition \ref{large}]
In addition to the classes asserted in the statement of the proposition, Alavi and Burness show that either $G_x \in \mathscr{C}_1$, which is excluded by Theorem~\ref{main}, or $G_x$ is as in part~(a) with $(n, t, q) = (3, 3, 11)$, or $G_x$ is one of finitely many examples belonging to classes $\mathscr{C}_6$ or $\mathscr{C}_9$ \cite[Proposition 4.7 and Theorem 4(ii)]{large}.
The cases where $G$ is a group appearing in the Atlas \cite{atlas} are excluded by \cite{bvmremarks}*{Theorems 1.1 and 1.2}.
The remaining possibilities for $(G,G_x)$ are:
\begin{table}[!ht]
  \begin{tabular}{lllll}
    \toprule
    $G$&$\PSL_5(3)$&$\PSL_4(5)$&$\PSL_4(7)$&$\PSL_2(q)\ q\in\{41,49,59,61,71\}$\\ \midrule    
    $G_x$&$M_{11}$&$2^4\ldotp A_6$&$\PSU_4(2)$&$A_5$\\
  \bottomrule
  \end{tabular}
\end{table}

\noindent
The number $|\p|$ of points is the
polynomial $f(s,t)=(s+1)(s^2t^2+st+1)$ if $\mathcal{S}$ is a generalised hexagon, 
and $f(s,t)=(s+1)(s^3t^3+s^2t^2+st+1)$ if $\mathcal{S}$ is a generalised octagon.
Running through the possibilities for $|\p|=|G:G_x|$ from the table above, we find that there
are no solutions to the equation $|\p|=f(s,t)$ with $s,t\geq 2$. This completes the proof.
\end{proof}

Extending Theorem \ref{main} to include the large subgroups in class $\mathscr{C}_2$ has also proven to be unexpectedly challenging to the authors.

\section{The proof of Theorem \ref{main}}
For the proof of Theorem \ref{main} we assume that $\soc(G)=\PSL_n(q)$ and that a point stabiliser is maximal in $G$ and is 
the stabiliser of a $k$-dimensional subspace $U$ of the natural module
$V = (\mathbb{F}_q)^n$,  where $0<k<n$.  Hence we may identify the point set $\p$ of $\s$ 
with the set of $k$-subspaces of $V$, which we denote by $\binom{V}{k}$. 
In particular, if $G$ contains a graph automorphism then $k=n/2$ and, for its index $2$ subgroup 
$H=G\cap \PGamL_n(q)$, the stabiliser $H_U$ is maximal in $H$. 
Thus we may assume that $\PSL_n(q)\trianglelefteqslant G \le \PGamL_n(q)$, so that $G$ acts on
$V$.   A graph automorphism of $G$ 
maps $\binom{V}{k}$ to $\binom{V}{n-k}$, and hence maps $\s$ to an isomorphic generalised polygon with point set identified with $\binom{V}{n-k}$. This means that we need only consider $1\leq k\leq n/2$.
Moreover, to facilitate the proof of Theorem \ref{main}, we consider the action of $\SL_n(q) \trianglelefteqslant G \le \GamL_n(q)$ on $V$, with the scalar matrices acting trivially on $V$.

Working towards a contradiction, we assume the following hypothesis.

\begin{hypothesis} \label{hyp}
Let $\mathcal{S=(P,L)}$ be a finite thick generalised hexagon or octagon of order $(s,t)$,
 such that $\p$ is identified with the set $\binom{V}{k}$ of $k$-subspaces of  $V = (\mathbb{F}_q)^n$, where $1\leq k\leq n/2$.  Suppose that $\SL_n(q)\trianglelefteqslant G\leq \GamL_n(q)$  and that $G$ induces a group of automorphisms of $\s$ acting naturally on $\p$, (so that a point stabiliser belongs to class $\mathscr{C}_1$).
\end{hypothesis}

Our proof of Theorem \ref{main} uses the following three lemmas. The first is from \cite{bgpps}.

\begin{lemma}[\cite{bgpps}*{Lemma~2.1(iv)}]\label{diamondOriginal}
Let $\mathcal{S}$ be a finite thick generalised hexagon or octagon of order $(s,t)$, and let $\mathcal{P}$ denote the set of points of $\mathcal{S}$.
Let $x,y_1,y_2\in\mathcal{P}$ such that $x\sim y_1$ and $x\sim y_2$, and let $g\in\Aut(\mathcal{S})$ such that $xg\neq x$.
If $g$ fixes $y_1$ and $y_2$, then $x,y_1,y_2,xg$ all lie on a common line.
\end{lemma}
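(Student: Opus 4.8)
The plan is to argue entirely inside the incidence graph $\Gamma$ of $\mathcal{S}$: it is bipartite with parts $\mathcal{P}$ and $\mathcal{L}$, connected, of diameter $n\in\{6,8\}$ and girth $2n\geqslant 12$, and for points $p,q$ the relation $p\sim q$ holds exactly when $d_\Gamma(p,q)=2$. The two basic facts I would use repeatedly are immediate from the girth being at least $12>4$: two distinct points lie on a \emph{unique} common line, and two distinct lines meet in at most one point. One may assume $y_1\neq y_2$ (and note $x\notin\{y_1,y_2\}$, since $x\sim y_i$ forces $x\neq y_i$). Let $\ell_1$ be the unique line on $x$ and $y_1$, and $\ell_2$ the unique line on $x$ and $y_2$.

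The whole statement reduces to proving $\ell_1=\ell_2$. Indeed, if $\ell:=\ell_1=\ell_2$, then $x,y_1,y_2\in\ell$; since $g$ fixes the two distinct points $y_1,y_2$ of $\ell$ and a line is determined by any two of its points, $g$ must stabilise $\ell$, so $xg\in\ell g=\ell$, and then $x,y_1,y_2,xg$ all lie on $\ell$, as wanted. (This implication does not use $xg\neq x$; that hypothesis is precisely what will exclude $\ell_1\neq\ell_2$.) So I would assume $\ell_1\neq\ell_2$ and derive a contradiction. Then $y_1,\ell_1,x,\ell_2,y_2$ is a walk of length $4$ in $\Gamma$, so $d_\Gamma(y_1,y_2)$ is even and at most $4$; it is not $2$, because a line $m$ on both $y_1$ and $y_2$ would be distinct from $\ell_1$ and from $\ell_2$ (otherwise $\ell_1$ and $\ell_2$ would share two of $x,y_1,y_2$, hence coincide), and then $y_1,\ell_1,x,\ell_2,y_2,m$ would be a $6$-cycle, impossible since the girth is at least $12$. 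Hence $d_\Gamma(y_1,y_2)=4$ and $y_1,\ell_1,x,\ell_2,y_2$ is a geodesic.

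Now I would apply $g$. Being a graph automorphism of $\Gamma$ that fixes the endpoints $y_1$ and $y_2$, it sends this geodesic to $y_1,\ell_1g,xg,\ell_2g,y_2$, again a geodesic of length $4$ between $y_1$ and $y_2$. But $4<6\leqslant n$, and in a generalised polygon a geodesic joining two vertices at distance less than $n$ is unique: two distinct such geodesics bound a closed walk, and hence a cycle, of length less than $2n$, against the girth. Concretely, two length-$4$ geodesics with distinct midpoints $x\neq xg$ produce a cycle of length $4$, $6$ or $8$, each ruled out after the routine check that the vertices involved are pairwise distinct, using the two basic facts above. Therefore $xg=x$, contradicting $xg\neq x$, and so $\ell_1=\ell_2$, which finishes the proof.

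I do not anticipate a substantial obstacle here: the lemma is essentially a repackaging of the ``no short cycles'' and ``unique short geodesics'' behaviour of finite thick generalised hexagons and octagons. The only care required is the bookkeeping in the two girth arguments — verifying that the closed walks one writes down are honest cycles of length below $2n$ rather than degenerate ones — and in each case the degeneracy is dismissed at once by uniqueness of the line through two points, or of the point on two lines.
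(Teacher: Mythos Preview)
Your proof is correct. The paper does not actually prove this lemma: it is quoted verbatim from \cite{bgpps} (as Lemma~2.1(iv) there) and used as a black box, so there is no ``paper's own proof'' to compare against. Your argument---reducing the claim to $\ell_1=\ell_2$, and then excluding $\ell_1\neq\ell_2$ by observing that $y_1,\ell_1,x,\ell_2,y_2$ and its $g$-image would be two distinct geodesics of length $4<n$ in the incidence graph, violating the girth condition---is the natural one and is essentially what one finds in the cited source. One small remark: your phrase ``one may assume $y_1\neq y_2$'' is really an observation that the lemma is only meaningful (indeed, only true) when $y_1\neq y_2$; the statement as recorded here leaves that implicit, and every application of the lemma in the paper has $y_1\neq y_2$.
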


The second lemma is not difficult to prove, and its proof is left to the reader.

\begin{lemma} \label{gam}
Suppose $\SL_n(q) \lhdeq G \le \GamL_n(q)$, $V=(\mathbb{F}_q)^n$ and $k \le n/2$.
Then, if $\dim(V) = n$ and $k \le n/2$, then the orbits of $G$ on $\binom{V}{k} \times \binom{V}{k}$ are
\begin{equation}\label{E:gamma}
\Gamma_i = \left\{ (x,y) \in \binom{V}{k} \times \binom{V}{k} \mid
\dim(x \cap y) = i\right\} \quad \text{ where $0 \le i \le k$}.
\end{equation}
Moreover, for $x \in \binom{V}{k}$ the orbits of $G_x$,  are
\[
\Gamma_i(x) = \left\{y \in \binom{V}{k} \mid \dim(x \cap y)=i\right\}
\quad \text{ where $0 \le i \le k$}.
\]
\end{lemma}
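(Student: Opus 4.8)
The plan is to deduce everything from the single fact that $\SL_n(q)$ acts transitively on each set $\Gamma_i$, together with the standard correspondence between orbits on pairs and orbits of a point stabiliser. First I would record the easy structural observations: any element of $\GamL_n(q)$ — being semilinear — preserves the dimension of a subspace and the dimension of an intersection of two subspaces, so each $\Gamma_i$ is invariant under $\GamL_n(q)$, hence under $G$; the sets $\Gamma_i$ with $0\le i\le k$ partition $\binom{V}{k}\times\binom{V}{k}$, since $x\cap y\subseteq x$ forces $0\le\dim(x\cap y)\le k$; and every such $\Gamma_i$ is nonempty, because realising a pair with $\dim(x\cap y)=i$ only requires $\dim(x+y)=2k-i\le n$, which holds automatically as $2k-n\le 0$ by the hypothesis $k\le n/2$. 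Thus it suffices to prove that $G$ is transitive on each $\Gamma_i$, and since $\SL_n(q)\lhdeq G$ it is enough to prove this for $\SL_n(q)$.

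For transitivity I would use a Steinitz exchange (Witt-type) argument. Given $(x,y)\in\Gamma_i$, choose a basis $e_1,\dots,e_i$ of $x\cap y$, extend it to a basis $e_1,\dots,e_k$ of $x$ and to a basis $e_1,\dots,e_i,f_{i+1},\dots,f_k$ of $y$; then $e_1,\dots,e_k,f_{i+1},\dots,f_k$ is a list of $2k-i$ vectors spanning $x+y$, hence a basis of $x+y$, and it extends to a basis of $V$. Performing the same construction for a second pair $(x',y')\in\Gamma_i$ and mapping one adapted basis of $V$ onto the other produces $T\in\GL_n(q)$ with $T(x)=x'$ and $T(y)=y'$. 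To correct the determinant I would compose $T$ with an element $S\in\GL_n(q)$ fixing both $x'$ and $y'$: since $k\ge 1$, when $i<k$ the adapted basis of $V$ contains a vector lying in $x'$ but not in $y'$ (namely $e'_{i+1}$, one of the vectors used to extend a basis of $x'\cap y'$ to a basis of $x'$), and if $i=k$ then $x'=y'$ and any basis vector of $x'$ will serve; rescaling such a vector by $\lambda\in\mathbb{F}_q^\times$ and fixing all other basis vectors of $V$ gives $S$ with $S(x')=x'$, $S(y')=y'$ and $\det S=\lambda$. Taking $\lambda=(\det T)^{-1}$ yields $ST\in\SL_n(q)$ with $ST(x)=x'$ and $ST(y)=y'$, so $\SL_n(q)$ is transitive on $\Gamma_i$; hence each $\Gamma_i$ is a single $\SL_n(q)$-orbit and, being $G$-invariant, a single $G$-orbit.

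For the statement about $G_x$, I would invoke the standard bijection between $G$-orbits on $\binom{V}{k}\times\binom{V}{k}$ and $G_x$-orbits on $\binom{V}{k}$, valid because $G$ is transitive on $\binom{V}{k}$: for each $G$-orbit $\Gamma$ on pairs, the set $\{y:(x,y)\in\Gamma\}$ is a single $G_x$-orbit. Applying this with $\Gamma=\Gamma_i$ shows that $\Gamma_i(x)=\{y:\dim(x\cap y)=i\}$ is a $G_x$-orbit, and these partition $\binom{V}{k}$ into nonempty pieces for exactly the reasons noted above.

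The lemma is genuinely routine; the only point demanding care is the determinant correction, and within it the borderline configuration $x+y=V$. There $i=2k-n$ together with $k\le n/2$ forces $k=n/2$ and $i=0$, so $V=x'\oplus y'$ and there is no basis vector of $V$ outside $x'+y'$ to rescale — but the argument above is unaffected, since it only ever rescales a vector already lying in $x'$ while fixing a basis of $y'$. One should also keep in mind the degenerate endpoints $i=0$ (where $x\cap y=0$) and $i=k$ (where $x=y$), and the trivial case $q=2$, in which $\SL_n(q)=\GL_n(q)$ and no correction is needed at all.
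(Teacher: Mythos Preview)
Your argument is correct and is exactly the standard one: invariance of the $\Gamma_i$ under semilinear maps, nonemptiness from $2k-i\le n$, transitivity of $\SL_n(q)$ via adapted bases plus a determinant correction, and the usual bijection between $G$-orbits on ordered pairs and $G_x$-orbits. The paper itself gives no proof of this lemma---it states that the result ``is not difficult to prove, and its proof is left to the reader''---so there is nothing to compare your approach against; you have supplied precisely the routine verification the authors omitted.
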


The third lemma allows us to characterise adjacency in $\s$.

\begin{lemma} Assume Hypothesis~\ref{hyp}  and let $x, y \in \mathcal{P}$.
Then the following properties hold.
\begin{itemize}
\item[(F1)] For every $i \in \{ 0,\ldots,k\}$, if $x, y$ are collinear and $\dim(x \cap y) = i$, then any $x',y' \in \mathcal{P}$ with $\dim(x' \cap y') = i$ are also collinear. 
\item[(F2)] For every $i \in \{ 0,\ldots,k-1\}$, if $x, y$ are collinear and $\dim(x \cap y) = i$, then there exists $y' \in \mathcal{P}$ such that $\dim(x \cap y')=i$ and $y' \not \sim y$.
\end{itemize}

\end{lemma}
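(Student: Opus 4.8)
I would prove (F1) directly. Since $G\le\Aut(\s)$ acts on $\s$ as collineations it preserves collinearity, and by Lemma~\ref{gam} $G$ is transitive on each orbital $\Gamma_i$. For $i=k$ there is nothing to check, because $\Gamma_k$ contains no pair of distinct points. For $i<k$, suppose $x,y$ are collinear with $\dim(x\cap y)=i$ and let $x',y'$ be any points with $\dim(x'\cap y')=i$; choose $g\in G$ with $(x,y)g=(x',y')$ (Lemma~\ref{gam}). As $g$ maps lines to lines, it sends the line on $x$ and $y$ to a line on $x'$ and $y'$, so $x'\sim y'$.

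For (F2) I plan a proof by contradiction. Fix $i\in\{0,\dots,k-1\}$; the statement is vacuous unless some collinear pair of points has intersection dimension $i$, so assume instead that there are collinear $x,y$ with $\dim(x\cap y)=i$ such that every $y'$ with $\dim(x\cap y')=i$ either equals $y$ or is collinear with $y$. Writing $y^\perp$ for the set of points collinear with $y$, this says exactly $\Gamma_i(x)\setminus\{y\}\subseteq y^\perp$ (note $y\in\Gamma_i(x)$). The key first step is to globalise this: applying an element of $G$ that carries $(x,y)$ to an arbitrary pair of $\Gamma_i$ (Lemma~\ref{gam}), and using that $G$ acts by collineations, we obtain $\Gamma_i(a)\setminus\{b\}\subseteq b^\perp$ for \emph{every} pair $(a,b)\in\Gamma_i$. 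Now fix such a pair, noting $a\ne b$ because $i<k$. By (F1), $a$ is collinear with every point of $\Gamma_i(a)$, so $\Gamma_i(a)\setminus\{b\}\subseteq a^\perp\cap b^\perp$. Since $\s$ is a generalised hexagon or octagon, its incidence graph has girth at least $12$; hence two distinct points lie on at most one common line, and any common neighbour of the collinear points $a$ and $b$ lies on the line joining them---otherwise the incidence graph would contain a $4$-cycle or a $6$-cycle. Consequently $\Gamma_i(a)\cup\{a\}$ lies on a single line, and since every $b'\in\Gamma_i(a)$ lies on it, this line is independent of the choice of $b\in\Gamma_i(a)$; denote it $\ell_a$, so that $\Gamma_i(a)\cup\{a\}\subseteq\ell_a$ for every $a\in\p$.

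The proof then closes combinatorially. If $\dim(a\cap b)=i$ with $a\ne b$, then $b\in\Gamma_i(a)\subseteq\ell_a$ and, by symmetry, $a\in\Gamma_i(b)\subseteq\ell_b$; so the distinct points $a$ and $b$ lie on both $\ell_a$ and $\ell_b$, forcing $\ell_a=\ell_b$. Hence $a\mapsto\ell_a$ is constant along every edge of the graph on $\p$ whose edges are the pairs in $\Gamma_i$. Now $G$ acts primitively on $\p$ (Hypothesis~\ref{hyp}, the point stabiliser being a maximal parabolic), so every non-diagonal orbital graph of $G$ on $\p$ is connected; as $i<k$, the orbital $\Gamma_i$ is non-diagonal, so this graph is connected. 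Therefore $a\mapsto\ell_a$ is globally constant, equal to some line $\ell$, and since $a\in\ell_a=\ell$ for all $a$ we get $\p\subseteq\ell$. This is impossible: $\s$ is thick, so $|\p|$ equals $(s+1)(s^2t^2+st+1)$ or $(s+1)(s^3t^3+s^2t^2+st+1)$ with $s,t\ge 2$, which far exceeds $|\ell|=s+1$. This contradiction proves (F2).

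I expect the decisive step to be the globalisation in the second paragraph: turning the hypothetical failure of (F2) at a single pair into the statement for all of $\Gamma_i$, and recognising that, together with the girth bound, this confines $\Gamma_i(a)\cup\{a\}$ to a line. The remainder---connectivity of non-diagonal orbital graphs of a primitive group, together with the elementary point count---is routine; a minor point is that $\Gamma_i(a)\neq\emptyset$ (needed to define $\ell_a$) holds because $1\le k\le n/2$ forces $n\ge 2k-i$.
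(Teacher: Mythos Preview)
Your proof is correct and follows essentially the same route as the paper: both deduce from the negation of (F2), together with (F1) and the absence of short cycles, that $\{a\}\cup\Gamma_i(a)$ lies on a single line for every $a$, and then use connectivity of the orbital graph $\Gamma_i$ (equivalently, primitivity of $G$) to force all points onto one line. The only cosmetic difference is that the paper writes the final connectivity step as an explicit induction on distance in the generalised Johnson graph, whereas you phrase it as ``$a\mapsto\ell_a$ is constant along edges, hence globally constant''; these are the same argument.
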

\begin{proof}
Property (F1) follows from Lemma \ref{gam}.
For (F2), suppose towards a contradiction that every point $y'$ with $\dim(x \cap y')=i$ is collinear with $y$. 
By (F1), every such point $y'$ is also collinear with $x$, and hence lies on the line $\ell$ through $x$ and $y$ (because otherwise $\{x,y,y'\}$ would form a triangle and $\mathcal{S}$ contains no triangles). 
Let $J = J(n,k)_i$ denote the generalised Johnson graph with vertex set $V(J) = \binom{V}{k}$ and two vertices adjacent if and only if they intersect in an $i$-subspace.
Since $G$ acts primitively on $\binom{V}{k}$, and since the connected components are $G$-invariant, $J$ is a connected graph.
Note that Property (F1) implies that adjacency in $J$ implies collinearity, but the converse is not necessarily true.
By definition of $J$, $y, y' \in J_1(x)$, the set of vertices adjacent to $x$ in $J$.
By the above argument, $\{x\} \cup J_1(x)$ is contained in the line $\ell$.
Since $G$ acts transitively on $J$ and since adjacency is preserved by this action, it is true for all $u \in \p$ that $\{u\} \cup J_1(u)$ is contained in a line of $\s$. Since $\s$ has more than one line, the diameter of $J$ is at least 2. 

We now prove by induction on the distance $d$, where $2 \le d \le\textup{diam}(J)$, that, for any vertices $u, v$ of $J$, if the distance $d = \delta(u,v)$ and $(u_0, u_1, \dots, u_d)$ is a path of length~$d$ in $J$ from $u=u_0$ to $v=u_d$, then $\{u_0,\dots,u_d\}$ is contained in the line $\ell$ containing $\{u\} \cup J_1(u)$.
First we prove this for $d=2$. Suppose that $\delta(u,v)=2$ and let $(u, w, v)$ be a path of length 2 in $J$ from $u$ to $v$. Note that $w\in J_1(u)\subseteq \ell$. Also $u, v$ both lie in $\{w\}\cup J_1(w)$ which, as we have shown, is contained in some line $\ell'$ of $\s$.  Then $u, w$ are contained in both $\ell$ and $\ell'$, and since two points lie in at most one line of $\s$ it follows that $\ell'=\ell$,
and so $u, w, v$ all lie in $\ell$ and the inductive assertion is proved for $d=2$. 
Now suppose inductively that $3\leq d\leq\textup{diam}(J)$ and that the assertion is true for all integers from $2$ to $d-1$.  Suppose that $\delta(u,v)=d$ and that $(u_0, u_1, \dots, u_d)$ is a path in $J$ from $u=u_0$ to $v=u_d$. Then $\delta(u,u_{d-1})=d-1$, so by induction $\{u_0,\dots,u_{d-1}\}\subseteq \ell$. Also $u_{d-2}, y\in \{u_{d-1}\}\cup J(u_{d-1})$,  which we have shown to be contained in some line $\ell'$; since $u_{d-2}, u_{d-1}$ are contained in both $\ell$ and $\ell'$, it follows that $\ell'=\ell$,
and the inductive assertion is proved for $d$. Hence by induction the assertion holds for all 
$d\leq\textup{diam}(J)$.
However, this is a contradiction because the points of $\mathcal{S}$ do not all lie on a single line.
\end{proof}

We are now in a position to prove Theorem \ref{main}.

\begin{proof}[Proof of Theorem \ref{main}]
Assume that $\s$, $G$ are as in the statement of Theorem \ref{main}   and that a point stabiliser $G_x$ is the stabiliser of a $k$-subspace. As discussed at the beginning of this section we may assume that Hypothesis~\ref{hyp} holds. Thus $\p=\binom{V}{k}$ and $k \le n/2$.

{\sc Claim 1}: $k \ge 4$.
Consider the action of $G$ on $\mathcal{P} \times \mathcal{P}$.
For each $i$ with $0 \le i \le k-1$, $G$ acts transitively on the set
$\Gamma_i$ defined in~\eqref{E:gamma} by Lemma~\ref{gam}.
It is a standard result in the theory of permutation groups that the orbits of $G$ on $\mathcal{P} \times \mathcal{P}$ are in one-to-one correspondence with the orbits of $G_x$ on $\mathcal{P}$,
and there must be at least one $G_x$-orbit for each possible distance from $x$ in the point graph of $\s$.
If $k < 3$, then the number of orbits of $G_x$ is less than four, so no point of $\mathcal{P} \setminus \{x\}$ is at distance $3$ from $x$ in $\mathcal{S}$, contradicting the assumption that $\mathcal{S}$ is either a generalised hexagon or a generalised octagon.
If $k=3$, then for the same reason $\s$ is not a generalised octagon, and so  $\mathcal{S}$ 
is a generalised hexagon and $G$ acts distance transitively on the point graph.
By the main result of Buekenhout and Van Maldeghem in \cite{bvm},  $\mathcal{S}$ is a classical generalised hexagon and its distance transitive group has socle $G_2(r^f)$ for some prime power $r^f$, which is a contradiction.
Hence $k \ge 4$ as claimed.

\smallskip

Now let $\{ e_1, \dots, e_n\}$ be a basis of $V$ and take $x = \langle e_1, \dots, e_k \rangle$. Let $k_1 < k$ be maximal such that there exists a point $y \sim x$ with $(x,y) \in \Gamma_{k_1}$ (as defined in \eqref{E:gamma}). Note that, by Claim 1, $n\geq 2k\geq 8$.

\smallskip

{\sc Claim 2}: $k_1 < k-1$.
For a contradiction, assume that $k_1 = k-1$ and without loss of generality that $y = \langle e_1, \dots, e_{k-1}, e_{k+1}\rangle$.
By (F2) there exists a point $y' \in \mathcal{P}$ such that $(x,y') \in \Gamma_{k-1}$ and $y \nsim y'$ and by (F1) we have $x \sim y'$ and so $\dim(x \cap y') = k-1$ and $\dim(y \cap y') \le k-2$.
Now $\dim(x \cap y) =\dim(x \cap y') = k-1$ implies that $\dim(x \cap y \cap y') \ge k-2$, and hence $\dim(y \cap y') =\dim(x \cap y \cap y') = k-2$.
We may assume without loss of generality that $y' = \langle e_2, \dots, e_k, e_{k+2} \rangle$.
But now the permutation matrix corresponding to $(1,k+1)(k,k+2)$ leaves $y$ and $y'$ fixed, but not $x$.
By Lemma~\ref{diamondOriginal}, this implies that $y\sim y'$, a contradiction. Hence $k_1 < k-1$, as required.

\smallskip

{\sc Claim 3}: $k_1 = 0$.
Recall that $k_1 < k$ is maximal such that there exists a point $y \sim x$ with $y \in \Gamma_{k_1}$.
Thus we may assume that
\[y = \langle e_1, \dots, e_{k_1}, e_{k+1}, \dots, e_{2k-k_1} \rangle.\]
If $2k - k_1 + 1 \leq n$, let
\[z = \langle e_1, \dots, e_{k_1}, e_{k+2}, \dots, e_{2k-k_1+1}\rangle\]
so that dim$(x \cap z) = k_1$ and dim$(y \cap z) = k-1 > k_1$.
Assume to the contrary that $k_1 > 0$ so in particular $z$ is defined.
It then follows from (F1) that $x \sim z$ and from Claim 2 and the maximality of $k_1$ that $y \nsim z$.
Since $1 \le k_1 \le k-2$, we have $k + 2 \leq 2k - k_1$ and hence the permutation matrix corresponding to $(1,k+2)(k,k-1)$ fixes $y$ and $z$ but not $x$.
But once again Lemma~\ref{diamondOriginal} implies that $y \sim z$, a contradiction. Therefore $k_1=0$ as claimed.

\smallskip

An immediate corollary of Claim 3 and (F1) is that $G$ acts flag-transitively on $\mathcal{S}$.

\smallskip

{\sc Claim 4}: $n=2k$ or $2k+1$.
For a contradiction, suppose that $2k+1<n$ and recall $k \le n/2$.
Let $y = \langle e_{k+1},\dots,e_{2k} \rangle$ and $z = \langle e_{k+2},\dots,e_{2k+1}\rangle$.
Observe that $x \sim y$, $x \sim z$ by $(F1)$; furthermore dim$(y \cap z) = k-2 >0$, so $y \nsim z$ by the maximality of $k_1$.
Since $k \ge 4$ by Claim 1, the permutation matrix corresponding to $(1,2k+2)(2,3)$ fixes $y$ and $z$ but not $x$, contradicting Lemma~\ref{diamondOriginal}.

\smallskip

{\sc Claim 5}: $n = 2k$.
Assume $n=2k+1$.
Let $y$ be as in Claim 4 and let
$z = \langle e_{k+1}, \dots, e_{2k-1}, e_1 + e_{2k+1}\rangle$.
Then $x \sim y$ and $x \sim z$ by Claim 3 and since $\dim(y \cap z) =k-1>0$, we see that $y \nsim z$ by Claim 3.
Once again we apply Lemma~\ref{diamondOriginal} by noting that since $k \ge 4$, the permutation matrix for $(1,2k+1)(k+1,k+2)$ leaves $y$ and $z$ fixed but not $x$, contradicting Lemma~\ref{diamondOriginal}. Hence $n=2k$ and
  Claim~5 is true.

\smallskip

To complete the proof let
\begin{equation}\label{E:xyz}
x = \langle e_1, \dots, e_k\rangle,\quad
y = \langle e_{k+1}, \dots, e_{2k}\rangle,\quad
z = \langle e_1 + e_{k+1}, \dots, e_i + e_{i+k} ,\dots, e_k+e_{2k} \rangle.
\end{equation}
Then $\dim(x \cap y) = \dim(x \cap z) = \dim(y \cap z) = 0$, and so $x$, $y$ and $z$ are pairwise collinear by Claim 3.
Then, since $\mathcal{S}$ does not contain any triangles, $x$, $y$ and $z$ lie on a line of $\s$, say $\ell$.
Consider the stabiliser $G_\ell$.
Note that $\ell$ is the unique line containing any pair of the elements $x$, $y$ or $z$ and so in particular, $G_\ell \ge \langle G_{xy}, G_{xz}, G_{yz} \rangle$.
Writing vectors in $V$ as $n$-dimensional row vectors over $\mathbb{F}_q$ relative to the basis $e_1,\dots,e_n$, and writing matrices relative to this basis, we see that $x$ consists of all vectors of the form $(X, 0)$, where $X, 0$ denote $k$-dimensional row vectors, and   
the stabiliser $G_x$ consists of all matrices $M = \bigl(\begin{smallmatrix}A & B\\C & D\end{smallmatrix}\bigr) \in G$ for which $(I \mid 0) M$ has the form $(X\mid 0)$ where $X\in\GL_k(\mathbb{F}_q)$.
Our aim is to show that $\langle G_{xy}, G_{xz}, G_{yz} \rangle$ contains $\SL_n(q)$.
Let $H = \SL_n(q)$ and let $H_x = H \cap G_x$ and define $H_y$, $H_z$, $H_{xy}$, $H_{xz}$, $H_{yz}$ and $H_\ell$ analogously. Let $M_k(q)$ denote the ring of all $k\times k$ matrices over $\mathbb{F}_q$.
Then
\[H_x = \langle \bigl(\begin{smallmatrix}A & 0\\C & D\end{smallmatrix}\bigr) \in \SL_n(q) \mid A, D \in \GL_k(q), C \in M_k(q)\rangle.\]
Similarly,
\[H_y = \langle \bigl(\begin{smallmatrix}A & B\\0 & D\end{smallmatrix}\bigr) \in \SL_n(q) \mid A, D \in \GL_k(q), B \in M_k(q)\rangle\]
and
\[H_z = \langle \bigl(\begin{smallmatrix}A & B\\C & D\end{smallmatrix}\bigr) \in \SL_n(q) \mid A+C = B+D \rangle.\]
From this we see that
\[H_{xy} = \langle \bigl(\begin{smallmatrix}A & 0\\0 & D\end{smallmatrix}\bigr) \in \GL_n(q) \mid A, D \in \GL_k(q), \det(AD)=1\rangle.\]
Similarly,
\[H_{xz} = \langle \bigl(\begin{smallmatrix}A & 0\\D-A & D\end{smallmatrix}\bigr) \in \GL_n(q) \mid A, D \in \GL_k(q), \det(AD)=1\rangle\]
and
\[H_{yz} = \langle \bigl(\begin{smallmatrix}A & A-D\\0 & D\end{smallmatrix}\bigr) \in \GL_n(q) \mid A, D \in \GL_k(q), \det(AD)=1\rangle.\]
Our aim is now to show that $H_\ell := \langle H_{xy}, H_{xz}, H_{yz}\rangle$ is equal to $H$.
We interrupt our proof of Theorem~\ref{main} to prove this in the following lemma.

\begin{lemma}\label{L:SL}
  The group generated by all matrices of the form
$\bigl(\begin{smallmatrix}A & 0\\0 & D\end{smallmatrix}\bigr)$,
$\bigl(\begin{smallmatrix}A & A-D\\0 & D\end{smallmatrix}\bigr)$ and
$\bigl(\begin{smallmatrix}A & 0\\D-A & D\end{smallmatrix}\bigr)$
where $A,D \in \GL_k(q)$ and $\det(AD)=1$ equals $H \cong \SL_{2k}(q)$.
\end{lemma}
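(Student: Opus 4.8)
The plan is to establish the two inclusions $H_\ell\subseteq H$ and $H\subseteq H_\ell$ separately, where $H=\SL_{2k}(q)$. The inclusion $H_\ell\subseteq H$ is immediate, since each of the three listed generators is block-triangular and so has determinant $\det(A)\det(D)=1$. For the reverse inclusion the strategy is: (i) show that $H_\ell$ contains the two opposite ``large'' unipotent subgroups $U^{+}=\bigl\{\bigl(\begin{smallmatrix}I&B\\0&I\end{smallmatrix}\bigr):B\in M_k(q)\bigr\}$ and $U^{-}=\bigl\{\bigl(\begin{smallmatrix}I&0\\C&I\end{smallmatrix}\bigr):C\in M_k(q)\bigr\}$; and (ii) show that $\langle U^{+},U^{-}\rangle=\SL_{2k}(q)$. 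Here I assume $k\geq2$, which is the only case needed (indeed $k\geq4$ in the proof of Theorem~\ref{main}).

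For step (i): if $D\in\SL_k(q)$ then $\bigl(\begin{smallmatrix}I&0\\0&D^{-1}\end{smallmatrix}\bigr)$ is a generator of the first type (with $A=I$) and $\bigl(\begin{smallmatrix}I&I-D\\0&D\end{smallmatrix}\bigr)$ is one of the second type (with $A=I$); multiplying them gives $\bigl(\begin{smallmatrix}I&I-D\\0&I\end{smallmatrix}\bigr)\in H_\ell$. Since such matrices multiply by addition of their top-right blocks, $H_\ell$ contains $\bigl(\begin{smallmatrix}I&B\\0&I\end{smallmatrix}\bigr)$ for every $B$ in the additive subgroup $\Lambda$ of $M_k(q)$ generated by $\{\,I-D:D\in\SL_k(q)\,\}$. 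The key point is that $\Lambda=M_k(q)$: taking $D=I+\lambda E_{ij}$ with $i\neq j$ shows $\lambda E_{ij}\in\Lambda$ for all off-diagonal positions, and choosing $D$ with $I-D=E_{ii}+E_{ij}-E_{ji}$ for some $j\neq i$ — which lies in $\SL_k(q)$ because the $2\times2$ minor of $D$ on rows and columns $\{i,j\}$ equals $\det\bigl(\begin{smallmatrix}0&-1\\1&1\end{smallmatrix}\bigr)=1$ in every characteristic — shows each diagonal unit $E_{ii}=(I-D)-E_{ij}+E_{ji}\in\Lambda$. Hence $U^{+}\leq H_\ell$. Symmetrically, using the generators of the first and third types gives $\bigl(\begin{smallmatrix}I&0\\D-I&I\end{smallmatrix}\bigr)\in H_\ell$ for all $D\in\SL_k(q)$, and the same computation yields $U^{-}\leq H_\ell$.

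For step (ii): write $x_{pq}(\nu)=I+\nu E_{pq}$ for $1\leq p\neq q\leq2k$ and $\nu\in\mathbb{F}_q$. Then $U^{+}$ contains every $x_{pq}(\nu)$ with $p\leq k<q$, and $U^{-}$ contains every $x_{pq}(\nu)$ with $q\leq k<p$. The Chevalley commutator identity $[x_{ab}(\lambda),x_{bc}(\mu)]=x_{ac}(\lambda\mu)$, valid whenever $a\neq c$, now produces the remaining elementary transvections from the ones already in $H_\ell$: with $b$ of the form $k+j$, $1\le j\le k$, it gives $x_{il}(\lambda\mu)$ for all $i\neq l$ with $i,l\leq k$, and with $b\in\{1,\dots,k\}$ it gives $x_{k+i,\,k+l}(\lambda\mu)$ for all $i\neq l$. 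Thus $H_\ell$ contains $x_{pq}(\nu)$ for all $p\neq q$ and all $\nu\in\mathbb{F}_q$, and since the elementary transvections generate $\SL_{2k}(q)$ we conclude $H_\ell=H$.

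I expect the only genuinely delicate step to be the claim $\Lambda=M_k(q)$, namely that the matrices $I-D$ with $D\in\SL_k(q)$ additively span all of $M_k(q)$: the diagonal matrix units are not themselves of this form and must be recovered from a small explicit determinant-one matrix, with care taken that the chosen matrix still works over $\mathbb{F}_2$ and $\mathbb{F}_3$. Everything else reduces to block-matrix arithmetic together with the standard fact that $\SL_n(q)$ is generated by its elementary transvections.
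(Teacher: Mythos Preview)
Your proof is correct. The overall strategy coincides with the paper's: multiply a generator of the first type by one of the second or third type to land in a block-unipotent subgroup, show that all of $U^{+}$ and $U^{-}$ is obtained, and then conclude $H_\ell=\SL_{2k}(q)$. The execution differs in two places. First, to fill out $U^{\pm}$ the paper produces the single element $\bigl(\begin{smallmatrix}I&0\\E_{1,2}&I\end{smallmatrix}\bigr)$ and then conjugates by the Levi $H_{xy}$ to obtain all $\bigl(\begin{smallmatrix}I&0\\E_{i,j}&I\end{smallmatrix}\bigr)$, whereas you argue directly that the additive span $\Lambda=\langle I-D:D\in\SL_k(q)\rangle$ equals $M_k(q)$; your route requires the small explicit determinant-one matrix to capture the diagonal $E_{ii}$, while the paper's conjugation trick sidesteps this but tacitly uses more of the Levi than just permutation matrices to get scalar multiples and diagonal blocks. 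Second, for the endgame the paper observes that $H_\ell$ contains the two opposite maximal parabolics $H_x$ and $H_y$ and invokes maximality of $H_x$ in $\SL_{2k}(q)$, whereas you stay inside $U^{+}$ and $U^{-}$ and recover the missing root subgroups via the Chevalley commutator relation $[x_{ab}(\lambda),x_{bc}(\mu)]=x_{ac}(\lambda\mu)$, finishing with the standard fact that elementary transvections generate $\SL_{2k}(q)$. Your version is slightly more self-contained in that it avoids quoting maximality of parabolic subgroups; the paper's version is marginally quicker once that fact is available.
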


\begin{proof}
Let $L = \langle
\bigl(\begin{smallmatrix}A & 0\\0 & D\end{smallmatrix}\bigr),
\bigl(\begin{smallmatrix}A & A-D\\0 & D\end{smallmatrix}\bigr),
\bigl(\begin{smallmatrix}A & 0\\D-A & D\end{smallmatrix}\bigr) \mid A, D \in \GL_k(q), \det(AD) = 1 \rangle$
and let $x, y, z\in\binom{V}{k}$ be as in~\eqref{E:xyz}.
Then, $L$ contains the matrix
$\bigl(\begin{smallmatrix}A & 0\\D-A & D\end{smallmatrix}\bigr)
\bigl(\begin{smallmatrix}A^{-1} & 0\\0 & D^{-1}\end{smallmatrix}\bigr)
 =
\bigl(\begin{smallmatrix}I & 0\\ DA^{-1}-I & I\end{smallmatrix}\bigr)$.
In particular, choosing $A = I$ and $D = I + E_{1,2}$ we have $DA^{-1} = I + E_{1,2}$ so that $L$ contains the matrix
$M=\bigl(\begin{smallmatrix}I & 0\\ E_{1,2} & I\end{smallmatrix}\bigr)$.
An element $h = h_{A,D} = \bigl(\begin{smallmatrix}A & 0\\0 & D\end{smallmatrix}\bigr)$ conjugates
$M=\bigl(\begin{smallmatrix}I & 0\\ E_{1,2} & I\end{smallmatrix}\bigr)$
to
$\bigl(\begin{smallmatrix}I & 0\\ D^{-1} E_{1,2} A & I\end{smallmatrix}\bigr)$.
Since $L$ contains
$\bigl(\begin{smallmatrix}A & 0\\0 & A\end{smallmatrix}\bigr)$
for each permutation matrix $A$, it follows that $L$ contains
$\bigl(\begin{smallmatrix}I & 0\\ E_{i,j} & I\end{smallmatrix}\bigr)$
for each $i,j$.
Hence $L$ contains $H_x$.
Similarly, $L$ contains $H_y$.
Since $H_x$ is maximal in $H$ and $H_x \neq H_{y}$, we conclude that $L = H$.
\end{proof}

Resuming our proof: Lemma~\ref{L:SL} implies that $G_\ell$ contains $\SL_n(q)$ and since $G$ is primitive on points it follows that $\SL_n(q)$ and hence also~$G_\ell$, is transitive on points.
This implies that $\ell$ is incident with all points, which is a contradiction.
This completes the proof of Theorem \ref{main}.
\end{proof}

\section*{Acknowledgements}
The authors gratefully acknowledge support from the Australian Research
Council Discovery Project Grants DP140100416 and DP190100450.
This research was conducted at the University of Western Australia
whose campus we acknowledge is located on the traditional lands of
the Whadjuk people of the Noongar Nation. We pay our respects to
Noongar Elders past, present and emerging.

\bibliographystyle{amsrefs}
\bibliography{bib}

\end{document}